\newcommand\cyr{%
\renewcommand\rmdefault{wncyr}%
\renewcommand\sfdefault{wncyss}%
\renewcommand\encodingdefault{OT2}%
\normalfont \selectfont} \DeclareTextFontCommand{\textcyr}{\cyr}
\newcommand{\be}{\begin{equation}}
\newcommand{\ee}{\end{equation}}
\newcommand{\bH}{\mathbb{H}}
\newcommand{\K}{\mathbb{K}}
\newcommand{\N}{\mathbb{N}}
\newcommand{\Q}{\mathbb{Q}}
\newcommand{\R}{\mathbb{R}}
\newcommand{\Z}{\mathbb{Z}}
\newcommand{\bS}{\boldsymbol{S}}
\newcommand{\cW}{\mathcal{W}}
\newcommand{\sD}{\mathscr{D}}
\newcommand{\st}{\,:\,}
\newcommand{\sfE}{\mathsf{E}}
\newcommand{\norm}[2]{{\lVert #1\rVert_{#2}}}
\newcommand{\blambda}{\boldsymbol{\lambda}} 
\newcommand{\cF}{\mathcal{F}}
\newcommand{\cC}{\mathcal{C}}
\newcommand{\ff}{\mathfrak{f}}
\newcommand{\ml}{\noindent\vskip 5pt}
\newcommand{\oX}{\overline{X}}
\renewcommand{\rmdefault}{cmr} 
\renewcommand{\sfdefault}{cmr} 
\newtheorem{theorem}{Theorem}
\theoremstyle{plain}
\newtheorem{definition}{Definition}
\newtheorem{examples}{Examples}
\newtheorem{proposition}{Proposition}
\newtheorem{remark}{Remark}
\numberwithin{equation}{section}
\newcommand{\loc}{\mathrm{loc}}
\newcommand{\Supp}{{\textrm{supp\,}}}
\newcommand{\jac}{\mathop{\mathrm{Jac}}}
\newcommand{\GL}{\mathop{\mathrm{GL}}}
\newcommand{\graph}{\mathop{\mathrm{graph}}}
\begin{document}

\begin{frontmatter}

\title{On Local Fractal Functions in\\ Besov and Triebel-Lizorkin Spaces\tnoteref{t1}}
\tnotetext[t1]{Research partially supported by DFG Grant MA 5801/2-1}

\author{Peter R. Massopust\fnref{tum,hmgu}}
\fntext[tum]{Centre of Mathematics, Research Unit M6, Technische Universit\"at M\"unchen, Boltzmannstr. 3, 85747 Garching, Germany, massopust@ma.tum.de}
\fntext[hmgu]{Helmholtz Zentrum M\"unchen,Ingolst\"adter Landstra{\ss}e 1, 8764 Neuherberg, Germany}

\begin{abstract}
Within the new concept of a local iterated function system (local IFS), we consider a class of attractors of such IFSs, namely those that are graphs of functions. These new functions are called local fractal functions and they extend and generalize those that are currently found in the fractal literature. For a class of local fractal functions, we derive explicit conditions for them to be elements of Besov and Triebel--Lizorkin spaces. These two scales of functions spaces play an important role in interpolation theory and for certain ranges of their defining parameters describe many classical function spaces (in the sense of equivalent norms). The conditions we derive provide immediate information about inclusion of local fractal functions in, for instance, Lebesgue, Sobolev, Slodeckij, H\"older, Bessel potential, and local Hardy spaces.
\end{abstract}

\begin{keyword}
Iterated function system (IFS) \sep local iterated function system \sep attractor \sep fractal interpolation \sep Read--Bajraktarevi\'{c} operator \sep local fractal function \sep Besov space \sep Triebel--Lizorkin space \sep Sobolev space \sep Slodeckij space \sep Bessel potential space \sep local Hardy space \sep H\"older space
\MSC[2010] 28A80, 37C70, 41A05, 41A30, 42B35
\end{keyword}

\end{frontmatter}

\section{Introduction}\label{sec1}
Iterated function systems, for short IFSs, are a powerful means for describing fractal sets and for modeling or approximating natural objects. IFSs were first introduced in \cite{BD,Hutch} and subsequently investigated by numerous authors. Within the fractal image compression community a generalization of IFSs was proposed in \cite{barnhurd} whose main purpose was to obtain efficient algorithms for image coding. The power of IFSs lies in the fact that they are built around contractive operators acting on complete metric spaces or Banach spaces.

Contractive operators on function spaces are important for the development of both the theory and algorithms for the solution of integral and differential equations. For instance, they are used in the theory of elliptic partial differential equations, Fredholm integral equations of the second kind, Volterra integral equations, and in the theory of ordinary differential equations. For the development of iterative solvers, contractive operators play a fundamental role.

One class of contractive operators is defined on the graphs of functions using a special kind of iterated function system. The fixed point of such an IFS is the graph of a fractal function. There is a vast literature on IFSs and the interested reader is referred to \cite{B0} for a recent review of the topic. Computationally, IFSs are used in areas such as computer graphics to obtain refinement methods that effectively compute points on curves and surfaces \cite{CDM}. They are also employed for the computation of function values of piecewise polynomial functions and wavelets. In fact, it can be shown that these applications use a variant of IFSs where the iterated functions are defined locally \cite{barnhurd}. For more details about these so-called local IFSs and, in particular, their computational applications, we refer to \cite{BHM1}.

In \cite{BHM1}, the generalization of an IFS to a local IFS was reconsidered from the viewpoint of approximation theory and from the standpoint of developing computationally efficient numerical methods based on fractal methodologies. In the current paper, we continue this former exploration of local IFSs and consider two very general classes of functions spaces onto which a certain class of contractive operator acts. In particular, we derive conditions under which such local fractal functions are elements of Besov and Triebel-Lizorkin function spaces. As these two scales of function spaces describe many classical  function spaces such as Lebesgue, Sobolev, Slodeckij, H\"older, Bessel potential, and local Hardy spaces, which are ubiquitous in numerical analysis, the theory of partial differential equations, and harmonic analysis, a deeper understanding of the interplay between contractive operators acting on local fractal functions and these function spaces is warranted.

The outline of this paper is as follows. In Section \ref{sec2}, we introduce the new concept of local IFS and summarize some of its properties. Local fractal functions are then defined in Section \ref{sec3} together with the relevant class of contractive operators. A condition under which the fixed point of such contractive operators belongs to the Banach space of bounded functions is derived. In the next section, we consider the Lebesgue Spaces $L^p$, $0 < p \leq \infty$, as quasi-Banach spaces on $\R^n$, and present a condition so that the fixed points of the contractive operators we investigate belongs to $L^p$. Section \ref{sec5} gives a short introduction to Besov and Triebel--Lizorkin function spaces. The main results are presented in Section \ref{sec6} where explicit conditions are derived for local fractal functions to belong to these two scales of function spaces.

Throughout this paper, we use the following notation. The set of positive integers is denoted by $\mathbb{N} := \{1, 2, 3, \ldots\}$, the set of nonnegative integers by $\N_0 = \N\cup\{0\}$, and the ring of integers by $\Z$. We denote the closure of a set $S$ by $\overline{S}$. (Quasi-)Normed spaces will be denoted by $(X, \norm{\bullet}{X})$ with (quasi-) norm $\norm{\bullet}{\bullet}$. As customary, we also do not distinguish notationally between a space and the set over which it is defined. 

For a vector $\xi := (\xi_1, \ldots, \xi_m)^T\in \R^m$, we define its $p$-quasi-norm by
\be\label{pnorm}
\|\xi\|_p := \begin{cases} 
\displaystyle{\left(\sum_{i=1}^m |\xi_i|^p\right)^{1/p}}, & 0 < p < \infty,\\
\max\{|\xi_i|\st i = 1, \ldots, m\}, & p = \infty.
\end{cases}
\ee
with the usual identification of $\|\bullet\|_\infty$ as $\displaystyle{\lim_{p\to \infty}}\, \|\bullet\|_p$.
\section{Local Iterated Function Systems}\label{sec2}
The concept of \textit{local} iterated function system is a generalization of an IFS as defined in \cite{B,BD,Hutch}. It was first introduced in \cite{barnhurd} and reconsidered in \cite{BHM1}. In what follows, $m\in\N$ always denotes a positive integer and $\N_m := \{1, \ldots, m\}$.

\begin{definition}\label{localIFS}
Suppose that $\{X_i \st i \in \N_m\}$ is a family of nonempty subsets of a Banach space $(X, \norm{\bullet}{X})$. Further assume that for each $X_i$ there exists a continuous mapping $f_i: X_i\to X$, $i\in \N_m$. Then $\cF_{\loc} := \{X; (X_i, f_i)\st i \in \N_m\}$ is called a \textbf{local iterated function system} (local IFS).
\end{definition}

Note that if each $X_i = X$, then Definition \ref{localIFS} coincides with the usual definition of a standard (global) IFS. However, the possibility of choosing the domain for each continuous mapping $f_i$ different from the entire space $X$ adds additional flexibility as will be recognized in the sequel. Also notice that one may choose the same $X_i$ as the domain for different mappings $f\in \cF_\loc$.

\begin{definition}
A local IFS $\cF_{\loc}$ is called \textbf{contractive} if there exists a norm $\norm{\bullet}{*}$ equivalent to $\norm{\bullet}{X}$ with respect to which all functions $f\in \cF_{\loc}$ are contractive (on their respective domains).
\end{definition}
\noindent
We can associate with a local IFS a set-valued operator $\cF_\loc : 2^X \to 2^X$ by setting
\be\label{hutchop}
\cF_\loc(S) := \bigcup_{i=1}^m f_i (S\cap X_i).
\ee

By a slight abuse of notation, we use the same symbol for a local IFS and its associated operator.

\begin{definition}
A subset $A\in 2^X$ is called a \textbf{local attractor} for the local IFS $\{X; (X_i, f_i)\st i \in \N_m\}$ if
\be\label{attr}
A = \cF_\loc (A) = \bigcup_{i=1}^m f_i (A\cap X_i).
\ee
\end{definition}
In \eqref{attr} it is allowed that $A\cap X_i$ is the empty set. Thus, every local IFS has at least one local attractor, namely $A = \emptyset$. However, it may also have many distinct ones. In the latter case, if $A_1$ and $A_2$ are distinct local attractors, then $A_1\cup A_2$ is also a local attractor. Hence, there exists a largest local attractor for $\cF_\loc$, namely the union of all distinct local attractors. We refer to this largest local attractor as {\em the} local attractor of a local IFS $\cF_\loc$.

\begin{remark}
There exists an alternative definition for \eqref{hutchop}. For given functions $f_i$ which are only defined on
$X_i\subset X$, one could introduce set functions (also denoted by $f_i$) that are defined on $2^X$ by setting
\[
f_i (S) := \begin{cases} f_i (S\cap X_i), & S\cap X_i\neq \emptyset;\\ \emptyset, & S\cap X_i = \emptyset,\end{cases}  \qquad i\in \N_m, \; S\in 2^X.
\]
On the right-hand side, $f_i(S\cap X_i)$ is the set of values of the original $f_i$ as in the previous definition. This extension of a given function $f_i$ to sets $S$ which include elements which are not in the domain of $f_i$ basically just ignores these elements. In the following, we use this definition of the set function $f_i$. 
\end{remark}

Now suppose that $X$ is a compact Banach space and the $X_i$, $i\in \N_m$, are closed subsets of $X$, i.e., compact in $X$. If in addition the local IFS $\{X; (X_i, f_i)\st i \in \N_m\}$ is contractive and $f_i(X_i) \subset X_i$, $\forall\,i\in \N_m$,  then the local attractor can be computed as follows. Let $K_0:= X$ and set
\[
K_\ell := \cF_\loc (K_{\ell-1}) = \bigcup_{i\in \N_m} f_i (K_{\ell-1}\cap X_i), \quad m\in \N.
\]
Then 
\[
A_\loc = \lim_{\ell\to\infty} K_\ell, 
\]
where the limit is taken with respect to the Hausdorff metric on $\bH$.
(For a proof, see \cite{barnhurd}.)

In the above setting, a relationship between the local attractor $A_\loc$ of a contractive local IFS $\{X; (X_i, f_i)\st i \in \N_m\}$ and the (global) attractor $A$ of the associated (global) IFS $\{X; f_i\st i \in \N_m\}$ was derived in \cite{mas13}. We state the result without proof and refer the interested reader to \cite{BHM1,mas13}.

\begin{proposition}
Let $X$ be a compact Banach space and let $X_i$, $i\in \N_m$, be closed subsets in $X$. Suppose that the global IFS $\cF:=\{X; f_i\st i \in \N_m\}$ and the associated local IFS $\cF_\loc := \{X; (X_i, f_i)\st i \in \N_m\}$ are both contractive. Then the local attractor $A_\loc$ of $\cF_\loc$ is a subset of the attractor $A$ of $\cF$. 
\end{proposition}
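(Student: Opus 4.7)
The plan is to exploit the iterative construction of both attractors recalled just before the proposition. Since $X$ is compact and each $X_i$ is closed (hence compact) with $f_i(X_i)\subset X_i$, and since the global IFS $\cF$ is contractive on the compact space $X$, both $A_\loc$ and $A$ are realized as Hausdorff limits of iterated images of $X$. Explicitly, set $K_0 := X =: K'_0$, and define
\begin{equation*}
K_\ell := \cF_\loc(K_{\ell-1}) = \bigcup_{i\in\N_m} f_i(K_{\ell-1}\cap X_i),\qquad K'_\ell := \cF(K'_{\ell-1}) = \bigcup_{i\in\N_m} f_i(K'_{\ell-1}),
\end{equation*}
so that $A_\loc = \lim_{\ell\to\infty} K_\ell$ and $A = \lim_{\ell\to\infty} K'_\ell$ in the Hausdorff metric on the space of nonempty compact subsets of $X$.

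The first key step is the pointwise inequality $\cF_\loc(S)\subseteq \cF(S)$ for every $S\in 2^X$, which is immediate from \eqref{hutchop} because $S\cap X_i\subseteq S$ and each $f_i$ is monotone on sets (recall the convention introduced in the Remark, which treats points outside $X_i$ as being ignored rather than producing extra elements). Next, a straightforward induction on $\ell$ shows $K_\ell \subseteq K'_\ell$ for every $\ell\in\N_0$: the base case $\ell=0$ is trivial, and for the inductive step
\begin{equation*}
K_\ell = \bigcup_i f_i(K_{\ell-1}\cap X_i) \subseteq \bigcup_i f_i(K'_{\ell-1}\cap X_i) \subseteq \bigcup_i f_i(K'_{\ell-1}) = K'_\ell,
\end{equation*}
using the induction hypothesis in the first inclusion.

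The final step is to pass to the Hausdorff limit. If $x\in A_\loc$, then by the characterization of Hausdorff convergence of compact sets there exists a sequence $x_\ell\in K_\ell$ with $x_\ell\to x$ in $X$. The inclusion just proved gives $x_\ell\in K'_\ell$, and the convergence $K'_\ell\to A$ in Hausdorff metric (together with compactness of $A$) forces the limit $x$ to lie in $A$. Hence $A_\loc\subseteq A$.

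The only non-routine point is verifying that set inclusion is preserved under Hausdorff limits of compact sets; this is the step that requires invoking the sequential characterization of the Hausdorff limit rather than just taking limits of inequalities. Everything else reduces to monotonicity of $\cF_\loc$ and $\cF$ with respect to set inclusion and a clean induction on the iteration index.
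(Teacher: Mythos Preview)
The paper does not actually prove this proposition; it explicitly states the result without proof and refers to \cite{BHM1,mas13}. So there is no ``paper's own proof'' to compare against.

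Your argument is correct and is the natural one. A few remarks. First, you correctly import the standing hypothesis $f_i(X_i)\subset X_i$ from the paragraph preceding the proposition (``In the above setting\dots''); this is what guarantees the iterative description $A_\loc=\lim_\ell K_\ell$ you rely on, even though the proposition as displayed does not restate it. Second, your induction step and the monotonicity $\cF_\loc(S)\subseteq\cF(S)$ are clean and need no further justification. Third, the passage to the limit is fine: since $(K_\ell')$ is a decreasing sequence of compacts with Hausdorff limit $A=\bigcap_\ell K_\ell'$, and $K_\ell\subseteq K_\ell'$, one gets $A_\loc\subseteq\bigcap_\ell K_\ell'=A$ directly, which is a slightly quicker way to phrase your sequential argument. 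Either formulation is valid.
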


Contractive local IFSs are point-fibered provided $X$ is compact and the subsets $X_i$, $i\in \N_m$, are closed. To show this, define the code space of a local IFS by $\Omega:= \prod_{n\in\N}\N_m$ and endowed it with the product topology $\mathfrak{T}$. It is known that $\Omega$ is metrizable and that $\mathfrak{T}$ is induced by the metric $d_F: \Omega\times\Omega\to \R$,
\[
d_F(\sigma,\tau) := \sum_{n\in \N} \frac{|\sigma_n - \tau_n|}{(m+1)^n},
\]
where $\sigma = (\sigma_1\ldots\sigma_n\ldots)$ and $\tau = (\tau_1\ldots\tau_n\ldots)$. (As a reference, see for instance \cite{Engel}, Theorem 4.2.2.) The elements of $\Omega$ are called codes.

Define a set-valued mapping $\gamma :\Omega \to \K(X)$, where $\K(X)$ denotes the hyperspace of {\em all} compact subsets of $X$, by
\[
\gamma (\sigma) := \bigcap_{n=1}^\infty f_{\sigma_1}\circ \cdots \circ f_{\sigma_n} (X),
\]
where $\sigma = (\sigma_1\ldots\sigma_n\ldots)$. Then $\gamma (\sigma)$ is point-fibered, i.e., a singleton. Moreover, in this case, the local attractor $A_\loc$ equals $\gamma(\Omega)$. For details about point-fibered global IFSs and their attractors, we refer the interested reader to \cite{K}, Chapters 3--5.
\section{Local Fractal Functions}\label{sec3}

Let $X$ be a nonempty connected set and $\{X_i \st i \in\N_m\}$ a family of nonempty subsets of $X$. Suppose $\{u_i : X_i\to X \st i \in \N_m\}$ is a family of bijective mappings with the property that
\begin{enumerate}
\item[(P)] $\{u_i(X_i)\st i \in\N_m\}$ forms a (set-theoretic) partition of $X$: 
\[
X = \bigcup_{i=1}^m u_i(X_i)\;\;\text{and}\;\; u_i(X_i)\cap u_j(X_j) = \emptyset,\; \forall i\neq j\in \N_m.
\]
\end{enumerate}
\noindent
Suppose that $(Y, \norm{\bullet}{Y})$ is a Banach space. Denote by $B(X, Y)$ the set
\[
B(X, Y) := \{f : X\to Y \st \text{$f$ is bounded}\}.
\]
Under the usual definition of addition and scalar multiplication of mappings, and endowed with the norm 
\[
\norm{f - g}{}: = \displaystyle{\sup_{x\in X}} \,\norm{f(x) - g(x)}{Y},
\] 
$(B(X, Y), \norm{\bullet}{})$ becomes a Banach space, the space of bounded functions from $X$ to $Y$. Similarly, $(B(X_i, Y), \norm{\bullet}{})$ is a Banach space

For each $i\in \N_m$, let $\lambda_i \in B(X_i,Y)$ and let $S_i : X_i\to \R$ be a bounded function. For the two $n$-tuples $\blambda := \{\lambda_1, \ldots, \lambda_m\}$ and $\bS := \{S_1, \ldots, S_m\}$, we define an affine operator $B(X,Y)\to Y^X$, called a \textbf{Read--Bajactarevi\'c (RB) operator}, by
\be\label{Phi}
\Phi(\blambda)(\bS) f := \sum_{i=1}^m (\lambda_i\circ u_i^{-1}) \,\chi_{u_i(X_i)} + \sum_{i=1}^m (S_i\circ u_i^{-1})\cdot (f_i\circ u_i^{-1})\,\chi_{u_i(X_i)},
\ee
or, equivalently,
\[
(\Phi(\blambda)(\bS) f)\circ u_i = \lambda_i + S_i\cdot f_i, \quad \text{on $X_i$, $\forall\;i\in\N_m$,}
\]
with $f_i = f\vert_{X_i}$. Here, we explicitly expressed the dependence of $\Phi$ on $\blambda$ and $\bS$. If one or both $n$-tuples of functions are fixed, we suppress the dependence.

As the mappings $\lambda_i$ are bounded in $Y$ and the functions $S_i$ in $\R$, $\Phi(\blambda)(\bS) f$ is also bounded in $Y$. Hence, $\Phi(\blambda)(\bS)$ maps $B(X,Y)$ into itself.

\begin{proposition}
Denote by $\|\bullet\|_{\infty,X_i}$ the $\sup$-norm on $X$ restricted to $X_i$. If $\displaystyle{\max_{i\in \N_m}}\{\|S_i\|_{\infty,X_i}\} < 1$, then $\Phi(\blambda)(\bS)$ is contractive on the Banach space $B(X, Y)$.
\end{proposition}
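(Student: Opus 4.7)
The plan is to check the standard Lipschitz estimate for $\Phi := \Phi(\blambda)(\bS)$ on the Banach space $(B(X,Y),\norm{\bullet}{\infty})$, exploiting that the family $\{u_i(X_i)\}_{i\in\N_m}$ is a genuine partition of $X$ by hypothesis (P), so that the characteristic functions $\chi_{u_i(X_i)}$ appearing in \eqref{Phi} are mutually orthogonal in the sense that each $x\in X$ lies in exactly one $u_i(X_i)$.

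First I would fix $f,g\in B(X,Y)$ and use \eqref{Phi} to subtract: the $\lambda_i$-terms depend only on the data $\blambda$, not on the argument of $\Phi$, and therefore cancel. What remains is
\[
(\Phi f - \Phi g)(x) \;=\; \sum_{i=1}^m \bigl(S_i\circ u_i^{-1}\bigr)(x)\cdot\bigl((f-g)\circ u_i^{-1}\bigr)(x)\,\chi_{u_i(X_i)}(x).
\]
Thanks to (P), for any given $x\in X$ there is a unique index $i=i(x)$ such that $x\in u_i(X_i)$; writing $y := u_i^{-1}(x)\in X_i$, the right-hand side reduces to the single term $S_i(y)\,(f(y)-g(y))$.

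Next I would take $Y$-norms, using the definition of $\norm{S_i}{\infty,X_i}$ and setting $s:=\max_{i\in\N_m}\norm{S_i}{\infty,X_i}<1$:
\[
\norm{(\Phi f - \Phi g)(x)}{Y} \;=\; |S_i(y)|\,\norm{f(y)-g(y)}{Y} \;\le\; s\,\norm{f-g}{\infty}.
\]
Taking the supremum over $x\in X$ yields $\norm{\Phi f - \Phi g}{\infty}\le s\,\norm{f-g}{\infty}$, which is the desired contractivity with Lipschitz constant $s<1$. For completeness one should also remark that $\Phi$ indeed maps $B(X,Y)$ into itself, which was already noted in the paragraph preceding the proposition: the $\lambda_i$'s are bounded in $Y$, the $S_i$'s are bounded scalar functions, $f$ restricted to each $X_i$ is bounded, and the supports $u_i(X_i)$ partition $X$ so only one term contributes at each point.

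There is no real obstacle here; the only point that requires care is not to lose a factor $m$ by naively bounding the sum over $i$ term by term — the partition property (P) is precisely what allows the sum over $i$ to be replaced by a single term at each $x$, so that the Lipschitz constant is $s$ rather than $m s$. If (P) were weakened to just a covering, one would need to invoke a different argument (e.g., a compatibility condition on overlaps) to preserve contractivity.
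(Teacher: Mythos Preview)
Your proof is correct and follows essentially the same approach as the paper: cancel the $\lambda_i$-terms, use the partition property (P) so that at each point exactly one summand survives, estimate pointwise by $\|S_i\|_{\infty,X_i}$, and take the supremum. The paper's write-up differs only cosmetically (it works with $\phi=g-h$ and records the decomposition $\Phi g=\sum_i (\Phi g)\circ u_i\,\chi_{X_i}$ explicitly), but the substance is identical.
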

\begin{proof}
For simplicity, we suppress the dependence of $\Phi$ on $\blambda$ and $\bS$. Let $g,h\in B(X,Y))$ and set $\phi:= g - h$. Note that, since $\{u_i (X_i)\st i \in \N_m\}$ is a partition of $X$ and $\{u_i\st i\in \N_m\}$ is a family of bijections, we have that
\[
\Phi g = \sum_{i=1}^m \Phi g\, \chi_{u_i(X_i)} = \sum_{i=1}^m (\Phi g)\circ u_i \,\chi_{X_i}.
\]
Hence, for all $x' = u_i (x)\in u_i(X_i)$, the following hold.
\begin{align*}
|\Phi f (x') - \Phi g (x')| & = |\Phi\phi (x')| = |\Phi\phi (u_i(x))| = |S_i (x) \cdot \phi_i (x)|  \leq \|S_i\|_{\infty,X_i}\,|\phi_i(x)|\\
& \leq \|S_i\|_{\infty,X_i}\,\|\phi\| \leq \left(\max_{i\in \N_m} \|S_i\|_{\infty,X_i}\right)\|\phi\|,
\end{align*}
where $\phi_i := \phi\vert_{X_i}$. Thus, taking the supremum over all $x'\in u_i(X_i)$ and using the fact again that $\{u_i (X_i)\st i \in \N_m\}$ is a partition of $X$, we obtain
\[
\|\Phi\phi\| \leq \left(\max_{i\in \N_m} \|S_i\|_{\infty,X_i}\right)\|\phi\|,
\]
which proves the claim.
\end{proof}

Therefore, by the Banach Fixed Point Theorem, $\Phi(\blambda)(\bS)$ has a unique fixed point $\ff\in B(X,Y)$, which satisfies the self-referential equation
\be\label{3.4}
\ff = \sum_{i=1}^m (\lambda_i\circ u_i^{-1}) \,\chi_{u_i(X_i)} + \sum_{i=1}^m (S_i\circ u_i^{-1})\cdot (\ff_i\circ u_i^{-1})\,\chi_{u_i(X_i)},
\ee
or, equivalently
\be
\ff\circ u_i = \lambda_i + S_i\cdot \ff_i, \quad \text{on $X_i$, $\quad\forall\;i\in\N_m$}.
\ee
The fixed point $\ff$ is called a \textbf{bounded local fractal function}. When necessary, we denote the dependence of $\ff$ on $\blambda$ and $\bS$ by $\ff(\blambda)(\bS)$.

The following result found in \cite{GHM} and, in more general form in \cite{M97}, is the extension to the setting of local fractal functions:
For fixed $\bS$, the mapping 
\[
\blambda\ni\underset{i=1}{\overset{m}{\times}} B(X_i,Y)\quad\longmapsto\quad\ff(\blambda)\in B(X,Y)
\] 
is a linear isomorphism. For more details and the proof, see \cite{BHM1,mas13}.

It was shown in \cite{BHM1} that $\graph\ff$ is a local attractor of a local IFS naturally associated with the RB--operator $\Phi$. For the sake of completeness, we state this result adapted to our setting without proof.
\begin{theorem}
Consider the family $\cW_\loc := \{X\times  Y; (X_i\times Y, w_i)\st i\in \N_N\}$, where $w_i (x,y):= (u_i (x), \lambda_i (x) + S_i (x) \cdot y)$. Then there exists a norm $\|\bullet\|_\theta$ on $X\times  Y$, so that $\cW_\loc$ is a contractive local IFS and the graph of the local fractal function $\ff$ associated with the operator $\Phi$ given by \eqref{Phi} is an attractor of $\cW_\loc$. Moreover, 
\be\label{GW}
\graph (\Phi\ff) = \cW_\loc (\graph\ff),
\ee
where $\cW_\loc$ denotes the set-valued operator \eqref{hutchop} for the local IFS $\cW_\loc$.
\end{theorem}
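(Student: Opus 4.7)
The plan is to prove the three assertions in order: construction of an appropriate norm $\|\bullet\|_\theta$ making $\cW_\loc$ contractive; identification of $\graph\ff$ as the associated local attractor; and verification of the self-referential identity \eqref{GW}. I would begin with the last item because it is purely algebraic and because it drives the other two.

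For the identity $\graph(\Phi\ff) = \cW_\loc(\graph\ff)$, I compute directly. Intersecting with the strip $X_i\times Y$ gives $\graph\ff\cap(X_i\times Y) = \{(x,\ff(x)):x\in X_i\}$, so
\[
w_i\bigl(\graph\ff\cap(X_i\times Y)\bigr) = \bigl\{\bigl(u_i(x),\,\lambda_i(x)+S_i(x)\,\ff(x)\bigr) : x\in X_i\bigr\}.
\]
Writing $x'=u_i(x)\in u_i(X_i)$ and using the partition property (P), the union over $i\in\N_m$ of these sets is precisely the graph of the function given by the right-hand side of \eqref{3.4} applied to $\ff$, namely $\graph(\Phi\ff)$. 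Because $\ff$ is the fixed point of $\Phi$, this immediately yields $\cW_\loc(\graph\ff)=\graph\ff$, so $\graph\ff$ is a local attractor in the sense of \eqref{attr}.

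To produce the norm $\|\bullet\|_\theta$, I use the classical Bielecki/theta-metric construction: on $X\times Y$ set
\[
\|(x,y)\|_\theta := \|x\|_{X^\ast} + \theta\,\|y\|_Y,
\]
where $\|\bullet\|_{X^\ast}$ is the norm on $X$ (equivalent to $\|\bullet\|_X$) with respect to which each $u_i$ is Lipschitz with constant $a_i<1$, and $\theta>0$ is to be chosen. Assuming the Lipschitz regularity of $\lambda_i$ and $S_i$ that is implicit in calling $\cW_\loc$ a contractive local IFS (constants $L_i$ and $\sigma_i$, respectively), and using boundedness of $\ff$ with $M:=\|\ff\|_\infty$, the estimate
\[
\|w_i(x_1,y_1)-w_i(x_2,y_2)\|_\theta \leq \bigl(a_i + \theta(L_i+\sigma_i M)\bigr)\|x_1-x_2\|_{X^\ast} + \theta\,\|S_i\|_{\infty,X_i}\,\|y_1-y_2\|_Y
\]
follows. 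Setting $a:=\max_i a_i$, $s:=\max_i \|S_i\|_{\infty,X_i}<1$, and $C:=\max_i(L_i+\sigma_i M)$, choose $\theta$ small enough that $a+\theta C<1$; then each $w_i$ is a strict contraction on $X_i\times Y$ with factor bounded by $\max\{a+\theta C,\,s\}<1$, so $\cW_\loc$ is contractive in the required sense.

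The main obstacle is the first step: the construction of $\|\bullet\|_\theta$ requires Lipschitz-type regularity on the $u_i$, $\lambda_i$, and $S_i$, which is stronger than the boundedness used in Section~\ref{sec3} to guarantee mere existence of $\ff\in B(X,Y)$. Once these regularity hypotheses are made explicit (as in \cite{BHM1}), the balancing of the parameter $\theta$ against $a$, $s$, and $C$ is the delicate quantitative input; the rest of the argument—contractivity of the set-valued map on the hyperspace of compact subsets and uniqueness of the local attractor containing $\graph\ff$—then follows from the standard Banach fixed point machinery together with the identity \eqref{GW} established above.
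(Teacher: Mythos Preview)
The paper does not prove this theorem; it is quoted from \cite{BHM1} and stated explicitly ``without proof.'' So there is no in-paper argument to compare against. That said, your proposal follows the standard route one finds in \cite{BHM1} and related sources: the algebraic verification of \eqref{GW} and the consequence $\cW_\loc(\graph\ff)=\graph\ff$ are correct and exactly how this is usually done.

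There is, however, a real gap in your construction of $\|\bullet\|_\theta$. In the second-coordinate estimate you replace the term $(S_i(x_1)-S_i(x_2))\,y_2$ by $\sigma_i\,M\,\|x_1-x_2\|$ with $M=\|\ff\|_\infty$. But $(x_2,y_2)$ is an arbitrary point of $X_i\times Y$, not a point on $\graph\ff$, so you cannot bound $\|y_2\|_Y$ by $M$. If the $S_i$ are genuinely nonconstant ($\sigma_i>0$) and $Y$ is an unbounded Banach space, then $w_i$ is not even Lipschitz on $X_i\times Y$, and no choice of $\theta$ rescues the estimate. The usual remedies are either (i) to assume the $S_i$ are constants (as in the classical Barnsley setting), in which case $\sigma_i=0$ and your bound is correct as written, or (ii) to replace the domain $X_i\times Y$ by a bounded strip $X_i\times K$ with $K\subset Y$ a closed ball large enough that each $w_i$ maps $X_i\times K$ into $X\times K$; contractivity is then established on the strip, which suffices for the attractor statement since $\graph\ff\subset X\times K$. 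You should make one of these hypotheses explicit rather than smuggling $M$ into the Lipschitz bound. You already flag that extra regularity beyond Section~\ref{sec3} is needed; this boundedness-of-$y$ issue is the additional structural assumption you are missing.
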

\section{Lebesgue Spaces $L^p$, $0 < p \leq \infty$}\label{sec4}
Let $\sfE$ be a real or complex vector space. A mapping $\|\,\cdot\,\|: \sfE\to \mathbb{R}_0^+$ is called a \textbf{quasi-norm} if it satisfies all the usual conditions of a norm except for the triangle inequality, which is replaced by
\be
\|x + y\| \leq c\,(\|x\| + \|y\|)
\ee
for a constant $c\geq 1$. If $c = 1$, then $\|\bullet\|$ is a norm. A complete quasi-normed space is called a \textbf{quasi-Banach space.} 

Recall that the Lebesgue spaces $L^p$, $0 < p \leq \infty$, defined on $\R^n$ are quasi-Banach spaces and, for $1\leq p \leq\infty$, Banach spaces. We also require the following closed subspace of $L^p$. Let $X\subset \R^n$ be a domain, i.e., an open subset of $\R^n$. Define
\[
L^p (\oX) := \{f\in L^p \st \Supp f \subset \oX\}.
\]
As $L^p (\oX)$ inherits its quasi-norm from $L^p$, it is also a quasi-Banach space.

First, we present a result that gives conditions for a fractal function $\ff$ to be an element of the quasi-Banach spaces $L^p (\oX)$. (See, also \cite{BHM1,mas13} for the case $n=1$. There, however, $L^p$, $0 < p < 1$, was considered as a complete metric space.) Here, we also assume that the two $n$-tuples $\blambda$ and $\bS$ are fixed so that we may suppress them in the notation.

\begin{theorem}\label{ThLp}
Let $X$ be a bounded domain in $\R^n$, let $Y:= \R$, and let $0 < p \leq \infty$. In addition to satisfying condition (P), the family of  subsets $\{X_i \st i\in \N_m\} \in 2^X$ and bijective mappings $\{u_i\st i\in \N_m\}$ are supposed to be such that each $u_i$ is a similarity (transformation), i.e., a mapping $X_i \to X$, that enjoys the property
\[
|u_i(x) - u_i(x^\prime)| = \gamma_i |x - x^\prime|, \qquad\forall\; x, x^\prime\in X_i,
\]
for some constant $\gamma_i \in \R^+$. (We do not assume that all $\gamma_i < 1$!) Furthermore, we assume that the functions $\lambda_i \in L^p (\oX)$ and the functions $S_i$ are bounded on $X_i$. Then the RB--operator $\Phi$ defined in \eqref{Phi} maps $L^p(\oX)$ into itself. If, in addition, the condition
\be\label{Lp}
\begin{cases}
\left(\displaystyle{\sum_{i=1}^m}\, \gamma_i^n \,\|S_i\|_{\infty, X_i}^p\right)^{1/p} < 1, & 0 < p \leq \infty;\\
\displaystyle{\max_{i\in\N_m} \left\{\|S_i\|_{\infty,X_i}\right\}} < 1, & p = \infty,
\end{cases}
\ee
holds, then $\Phi$ is also contractive on $L^p (\oX)$.
\end{theorem}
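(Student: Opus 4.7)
My plan is to exploit the partition $\{u_i(X_i)\}$ of $X$ together with the change-of-variables formula for similarities to reduce every $L^p$-norm that appears to a finite sum of integrals over the pieces $X_i$. Since $u_i$ is a similarity with ratio $\gamma_i$, it has the form $u_i(x) = \gamma_i O_i x + b_i$ with $O_i$ orthogonal, hence $|\det Du_i| = \gamma_i^n$, and the substitution $x = u_i(y)$ yields $\int_{u_i(X_i)} |g \circ u_i^{-1}|^p\,dx = \gamma_i^n \int_{X_i} |g|^p\,dy$ for every measurable $g$. Combining this with the disjointness of the sets $u_i(X_i)$ gives the key identity
\begin{equation*}
\Bigl\| \sum_{i=1}^m (g_i \circ u_i^{-1})\, \chi_{u_i(X_i)} \Bigr\|_p^p \;=\; \sum_{i=1}^m \gamma_i^n \int_{X_i} |g_i|^p\,dy, \qquad 0 < p < \infty,
\end{equation*}
and, for $p = \infty$, $\bigl\|\sum_i (g_i \circ u_i^{-1}) \chi_{u_i(X_i)}\bigr\|_\infty = \max_i \|g_i\|_{\infty, X_i}$.

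For the contractivity claim, the $\lambda_i$-part of \eqref{Phi} cancels in the difference $\Phi f - \Phi g$, leaving only the homogeneous $S_i$-terms. Applying the identity above with $g_i = S_i \cdot (f_i - g_i)$, bounding $|S_i| \leq \|S_i\|_{\infty, X_i}$, and using $\int_{X_i} \leq \int_{\oX}$, I immediately obtain
\begin{equation*}
\|\Phi f - \Phi g\|_p^p \;\leq\; \Bigl( \sum_{i=1}^m \gamma_i^n \|S_i\|_{\infty, X_i}^p \Bigr) \|f-g\|_p^p \qquad (0 < p < \infty),
\end{equation*}
whose $p$-th root is exactly the constant appearing in \eqref{Lp}. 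The case $p = \infty$ follows in the same way from the $\sup$-norm version of the identity, giving $\|\Phi f - \Phi g\|_\infty \leq \max_i \|S_i\|_{\infty, X_i} \|f - g\|_\infty$.

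For the mapping statement, I apply the same identity directly to $\Phi f$ itself together with the standard quasi-triangle inequality $|a+b|^p \leq C_p(|a|^p + |b|^p)$ (with $C_p = 1$ for $p \leq 1$ and $C_p = 2^{p-1}$ for $p \geq 1$) to obtain
\begin{equation*}
\|\Phi f\|_p^p \;\leq\; C_p \sum_{i=1}^m \gamma_i^n \bigl( \|\lambda_i\|_{L^p(X_i)}^p + \|S_i\|_{\infty, X_i}^p \|f\|_p^p \bigr) < \infty,
\end{equation*}
so $\Phi f$ lies in $L^p$; since $\Supp \Phi f \subset \bigcup_i u_i(X_i) = X \subset \oX$, in fact $\Phi f \in L^p(\oX)$. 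The $p = \infty$ case is analogous with the $\sup$-norm identity.

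The principal (and essentially only) technical ingredient is the change-of-variables formula for similarities yielding the Jacobian factor $\gamma_i^n$; once this is in hand, every remaining step is bookkeeping with disjoint supports and the elementary quasi-triangle inequality. A feature worth flagging is that the argument never requires $\gamma_i < 1$: large expansion ratios are admissible as long as they are offset by correspondingly small $\|S_i\|_{\infty, X_i}$ so that the weighted sum in \eqref{Lp} stays below $1$.
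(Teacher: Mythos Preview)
Your argument is correct and follows essentially the same route as the paper: split the $L^p$-norm over the partition $\{u_i(X_i)\}$, apply the change of variables $x' = u_i(x)$ with Jacobian $\gamma_i^n$, bound $|S_i|$ by $\|S_i\|_{\infty,X_i}$, and sum. You are in fact slightly more explicit than the paper about why $\Phi f \in L^p(\oX)$ (the paper simply asserts it), and your use of the quasi-triangle inequality there is appropriate; one minor cosmetic point is the symbol clash in ``$g_i = S_i\cdot(f_i - g_i)$'', where the left-hand $g_i$ is the dummy in your identity and the right-hand one is the restriction of $g$---rename one of them.
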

The unique fixed point $\ff:X \subset\R^n\to \R$ of $\Phi$ in $L^p(\oX)$ is called a \textbf{local fractal function of class} $L^p$.

\begin{proof}
Note that under the hypotheses on the functions $\lambda_i$ and $S_i$ as well as the mappings $u_i$, $\Phi f$ is well-defined and an element of $L^p (\oX)$. It remains to be shown that under conditions stated in the theorem, $\Phi$ is contractive on $L^p (\oX)$. 

To this end, suppose that $g,h \in L^p (\oX)$. Let $\phi := g - h$ and denote Lebesgue measure on $\R^n$ by $dm$. Then, for $0 < p <\infty$, we obtain, the following estimates:
\begin{align*}
\|\Phi\phi\|_{L^2}^p & = \int\limits_{\R^n} |\Phi \phi |^p dm = \sum_{i=1}^m \int\limits_{u_i(X_i)} \left| \Phi\phi(x') \right|^p\, dx' = \sum_{i=1}^m \gamma_i^n\,\int\limits_{X_i} \left| \Phi\phi (u_i)(x) \right|^p\, dx\\
& = \sum_{i=1}^m\,\gamma_i^n\,\int\limits_{X_i} \left| S_i (x) \cdot \phi_i(x)\right|^p\,dx \leq \sum_{i=1}^m\,\gamma_i^n\,\|S_i\|^p_{\infty, X_i}\,\int\limits_{X_i} \left| \phi_i(x))\right|^p\,dx\\
&  \leq \left(\sum_{i=1}^{m}\,\gamma_i^n\,\|S_i\|^p_{\infty, X_i}\right) \|\phi\|^p_{{L^p}}.
\end{align*}

Now let $p= \infty$. Then, for $x'\in u_i(X_i)$, we have that
\begin{align*}
|\Phi \phi (x')| & = |\Phi\phi (u_i(x))| = | S_i(x) \cdot \phi_i (x)| \leq \left(\max_{{i\in\N_N}}\,\|S_i\|_{\infty,X_i}\right)\|\phi\|_{L^\infty},\end{align*}
which implies
\[
\|\Phi \phi\|_{L^\infty} \leq \left(\max_{{i\in\N_N}}\,\|S_i\|_{\infty,X_i}\right)\|\phi\|_{L^\infty}.
\]
These calculations prove the claims.
\end{proof}

The conditions given in \eqref{Lp} can be more succinctly written in the following way. Define an Euclidean vector $\xi := (\xi_1, \ldots, \xi_m)^T\in \R^m$ whose entries are given by
\be\label{xi}
\xi_i = \begin{cases}
\gamma_i^{n/p} \|S_i\|_{\infty,X_i}, & 0 < p < \infty,\\
\|S_i\|_{\infty,X_i}, & p = \infty.
\end{cases}
\ee
Then, we can express the statement of Theorem \ref{ThLp} as follows:
\[
\forall\,0 < p \leq\infty:\;\|\xi\|_p < 1\quad\Longrightarrow\quad \ff\in L^p (\oX)
\]
\begin{remark}
In Theorem \ref{ThLp} it is assumed that the $u_i$, $i\in\N_m$, are similitudes. This will be the setting in Section \ref{sec6}. However, as the proof shows, this assumption can be replaced by the weaker condition
\[
\gamma_i := \sup_{x\in X_i} | \det \jac u_i(x)| < \infty, \quad i\in \N_m,
\]
where $\jac$ denotes the Jacobi-matrix. This condition holds, for instance, in the case when each $u_i$ is an affine mapping of the form $u_i = A_i (\bullet) + b_i$, with $A_i \in \GL (n,\R)$ and $b_i \in \R$, $i\in \N_m$.
\end{remark}
\section{Besov and Triebel--Lizorkin Spaces}\label{sec5}
The theory of Besov and Triebel--Lizorkin spaces is very rich and has numerous applications to partial differential equations and approximation
theory, including finite elements, splines and wavelets. Originally, these spaces were developed to close the gaps in the ladders of smoothness spaces
such as the H\"{o}lder spaces $C^s$, $s\in \mathbb{R}_0^+$, and the classical Sobolev spaces $W^{k,n}$, $k,n\in\mathbb{Z}_0^+$. This
section provides a very rudimentary introduction to these two scales of function spaces and the interested reader is referred to \cite{T,TII,TIII} and the references therein.

Recall that the $M$-th order forward difference operator $\Delta_h^M$, $M\in \N$, of step size $h\in\mathbb{R}^n$ acting on a function $f: \R^n \to \R$ is given by
\be
(\Delta_h^M f)(x) := \sum_{\mu = 0}^M (-1)^{M - \mu} {M\choose \mu}\,f(x + \mu\,h).
\ee
In case $f$ is defined on a bounded domain $X\subset \R^n$, we set
\[
\Delta_h^M f(x; X) := 
\begin{cases}
\Delta_h^M f(x), & \textrm{if $x + \mu h\in X$ for $\mu = 0,1,\ldots, M$;}\\
0, &\textrm{otherwise}.
\end{cases}
\]

In the following, we denote the canonical Euclidean norm in $\R^n$ by $|\bullet|$. In addition, we define for $0< p\leq \infty$,
\[
\sigma_p := \frac{1}{\min\{p,1\}} - 1\geq 0,
\]
and for $0< p < \infty$, $0< q\leq \infty$,
\[
\sigma_{n,p,q} := \frac{n}{\min\{p,q\}}.
\]

\begin{definition}[{\cite[Section 2.5.12]{T}}]\label{Besov}
Let $0 < p,q \leq \infty$ and let $s > \sigma_p$. Suppose $M\in\mathbb{N}$ is such that $M > s \geq M - 1$. Then a function $f\in L^p$ belongs to the
\begin{itemize}[leftmargin=*]
\item \textbf{homogeneous Besov space $\dot{B}^s_{p,q} = \dot{B}^s_{p,q} (\R^n)$} iff
\be\label{Bsemi}
|f|_{\dot{B}^s_{p,q}} := \begin{cases}
    \left(\displaystyle{\int_{\mathbb{R}^n} |h|^{-s q} \|\Delta_h^M f\|_{L^p}^q\,\frac{dh}{|h|^n}}\right)^{\frac{1}{q}} < \infty, & 0 < q < \infty;\\ \\
    \displaystyle{\sup_{0\neq h\in\mathbb{R}^n}}\,|h|^{-s}\,\|\Delta_h^M f\|_{L^p} < \infty, & q = \infty.
    \end{cases}
\ee
\item \textbf{inhomogeneous Besov space} $B^s_{p,q} := B^s_{p,q} (\R^n)$ iff
\be\label{B}
\|f\|_{B^s_{p,q}} := \|f\|_{L^p} +  |f|_{\dot{B}^s_{p,q}} < \infty.
\ee
\end{itemize}
\end{definition}
\noindent
$B^s_{p,q}$ is a Banach space for $1\leq p,q \leq \infty$; otherwise $B^s_{p,q}$ is a quasi-Banach space. 

Note that if $P$ is a polynomial of order $M$, then it is in the kernel of the $M$-th order difference operator and, therefore, $|P|_{\dot{B}^s_{p,q}} = 0$.

\begin{definition}[{\cite[Section 2.5.10]{T}}]\label{TL}
Let $0 < p < \infty$, $0 < q \leq \infty$, and suppose $s > \sigma_{n,p,q}$. If $M\in\mathbb{N}$ is such that $M > s \geq M-1$, then a function $f\in L^p$ is said to belong to the 
\begin{itemize}[leftmargin=*]
\item \textbf{homogeneous Triebel--Lizorkin space} $\dot{F}^s_{p,q}(\R^n)$ iff
\be\label{Fsemi}
|f|_{\dot{F}^s_{p,q}} := \begin{cases}
    \left\|\left(\displaystyle{\int_{\mathbb{R}^n} |h|^{-s q}\,|\Delta_h^M f (\bullet)
    |^q\frac{dh}{|h|^n}}\right)^{\frac{1}{q}}\right\|_{L^p} < \infty, & 0 < q < \infty;\\ \\
    \left\|\displaystyle{\sup_{0\neq h\in\mathbb{R}^n}} |h|^{-s}\,|\Delta_h^M f (\bullet)|\,\right\|_{L^p} < \infty, & q = \infty.
\end{cases}
\ee
\item \textbf{inhomogeneous Triebel--Lizorkin space} $F^s_{p,q} := F^s_{p,q} (\R^n)$ iff
\be
\|f\|_{F^s_{p,q}} := \|f\|_{L^p} + |f|_{\dot{F}^s_{p,q}} < \infty.
\ee
\end{itemize}
\end{definition}
\noindent
$F^s_{p,q}$ is a Banach space for $1\leq p,q \leq \infty$; otherwise a quasi-Banach space. Again, polynomials of order $M$ have vanishing semi-norm $|\bullet |_{\dot{F}^s_{p,q}}$.
\begin{remark}
The homogeneous Besov and Triebel-Lizorkin spaces are more compactly described as the linear spaces of all $f\in \L^p$ such that
\be\label{cbesov}
\left\|\,|\bullet|^{-\frac{n}{q}-s}\,\left\|\Delta_{(\bullet)}^M f (\ast)\right\|_{L^p}\right\|_{L^q} < \infty,
\ee
respectively,
\be\label{ctriebel}
\left\|\,\left\||\ast|^{-\frac{n}{q}-s}\,\Delta_{(\ast)}^M f (\bullet)\right\|_{L^q}\right\|_{L^p} < \infty.
\ee
Here, the $L^p$ norm refers to $(\ast)$ and the $L^q$ norm to $(\bullet)$.
\end{remark}
To show the versatility of Besov and Triebel--Lizorkin spaces, some commonly known function spaces are expressed as special cases of these function spaces.
\begin{description}[leftmargin=*]
\item[H\"{o}lder spaces] For $s > 0$ and $s\notin\mathbb{N}$: $C^s = B^s_{\infty,\infty}$.\ml
\item[Sobolev spaces] For $1 < p < \infty$ and $k\in\mathbb{N}_0$: $W^{k,p} = F_{p,2}^k$ and $W^{k,2} = B^k_{2,2}$.\ml
\item[Slodeckij spaces] For $1 \leq p < \infty$ and $< s \notin\N_0$: $W^{s,p} = B^s_{p,p} = F^s_{p,p} $.\ml
\item[Bessel potential spaces] For $1 < p < \infty$ and $s > 0$: $H^{s,p} = F^s_{p,2}$.
\end{description}
\ml\noindent
Here, equality of function spaces is meant in the sense of equivalent quasi-norms.

Now suppose that $X$ is a domain in $\R^n$. Let $A$ be either $B$ or $F$.
\begin{definition}
Let $0 < p,q \leq \infty$ (with $p< \infty$ for the $F$-spaces) and $s\in \R^+$. Then $A^s_{p,q} (\oX)$ is the closed subspace of $A^s_{p,q}$ given by
\[
A^s_{p,q} (\oX) := \left\{f\in A^s_{p,q}\st \Supp f \subseteq\oX\right\}.
\]
\end{definition}
\noindent
$A^s_{p,q} (\oX)$ inherits its norm from $A^s_{p,q}$ and thus is a quasi-Banach space.

In addition to the Banach space $C(X)$ of $\R$-valued uniformly continuous functions on $X$ and the classical smoothness spaces $C^k (X)$, $k\in\N$, we also require in the following the \textbf{Zygmund spaces} $\cC^s$, $s\in \R^+$, which are defined in the following way:
\begin{definition}[{\cite[Definition 2]{TIII}}]
Let $s\in \R^+$ be written as $s = [s]^- + \{s\}^+$, where $[s]^-\in \N_0$ and $0 < \{s\}^+ \leq 1$. Then
\[
\cC^s := \cC^s (\R^n) := \left\{f\in C(\R^n)\st \|f\|_{\cC^s} < \infty\right\},
\]
where
\[
\|f\|_{\cC^s} := \|f\|_{C^{[s]^-}(\R^n)} + \sum_{|\alpha| = [s]^-} \sup_{0\neq h\in \R^n}\,|h|^{-\{s\}^+}\,\|\Delta_h^2 D^\alpha f\|_{C(\R^n)}.
\]
Here, $D^\alpha$ denotes the ordinary differential operator with multi-index $\alpha\in \N_0^n$. 
\end{definition}
\noindent
Zygmund spaces on domains $X\subseteq\R^n$ are defined in a similar way:
\[
\cC^s(\oX) := \{f \in \cC^s \st \Supp f \subset \oX\}.
\]

It is worthwhile stating that the Zygmund spaces $\cC^s$ coincide with the classical H\"older spaces $C^s$ in case $s\in\Q^+\setminus\N$, and that for $s := k\in \N$, $C^k(\R^n) \subsetneq \cC^k$.

We introduced the Zygmund spaces since the functions in $\cC^s$ are pointwise multipliers for the $B$-- and $F$--scale of function spaces. Recall that a function $g$ is called a \textbf{pointwise multiplier} for $A^s_{p,q}$ if the mapping $f\mapsto g\cdot f$ is a bounded linear operator on $A^s_{p,q}$ \cite[Section 2.8]{T}. 

To this end, we have the following result adapted to our setting. The proofs can be found in \cite[Section 4.2.2]{TII}.

\begin{proposition}
Let $0< p\leq \infty$ ($0< p < \infty$ for the $F$--spaces), $0 < q \leq\infty$, and $s\in\R^+$. Suppose $\varrho > \max\left\{s, n\left(\frac1p - 1\right)_+ -s \right\}$, where $(\bullet)_+ := \max\{\bullet, 0\}$. Then
\[
\|g\cdot f\|_{A^s_{p,q}} \leq c\,\|g\|_{\cC^\varrho}\,\|f\|_{A^s_{p,q}}, \quad\textrm{some $c > 0$},
\]
for all $g\in \cC^\varrho$ and all $f\in A^s_{p,q}$.
\end{proposition}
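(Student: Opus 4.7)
My plan is to reduce the multiplier estimate to Bony's paraproduct calculus and the Littlewood--Paley characterization of the $B$- and $F$-scales. Recall that the Zygmund space $\cC^\varrho$ coincides (with equivalent norms) with $B^\varrho_{\infty,\infty}$ for every $\varrho > 0$, so the hypothesis $g \in \cC^\varrho$ is equivalent to
\[
\sup_{j\in\N_0} 2^{j\varrho}\,\|\varphi_j \ast g\|_{L^\infty} \lesssim \|g\|_{\cC^\varrho},
\]
for a fixed dyadic resolution of unity $\{\varphi_j\}_{j\in\N_0}$ on the Fourier side. This converts the assumption on $g$ into uniform decay of its dyadic frequency blocks in $L^\infty$, a form that interacts cleanly with the $A^s_{p,q}$-norm of $f$.

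Next, I would invoke Bony's decomposition
\[
g\cdot f \;=\; \Pi_g f \,+\, \Pi_f g \,+\, R(g,f),
\]
with $\Pi_g f := \sum_{j} (S_{j-2}g)\,(\varphi_j \ast f)$, the symmetric paraproduct $\Pi_f g$, and the resonant remainder $R(g,f) := \sum_{|j-k|\le 1}(\varphi_j \ast g)(\varphi_k \ast f)$. Each summand has controlled spectral support, which lets me compare it piece by piece to a Littlewood--Paley block and measure it in the appropriate $\ell^q(L^p)$ (for $B$) or $L^p(\ell^q)$ (for $F$) norm. For $\Pi_g f$, the multiplier $S_{j-2}g$ is uniformly bounded by $\|g\|_{L^\infty} \lesssim \|g\|_{\cC^\varrho}$, and the blocks $(\varphi_j\ast f)$ reassemble $\|f\|_{A^s_{p,q}}$; this piece needs only $\varrho > 0$. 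For $\Pi_f g$, each block $\varphi_j \ast g$ is bounded by $2^{-j\varrho}\|g\|_{\cC^\varrho}$, and the weight $2^{js}$ built into $\|\cdot\|_{A^s_{p,q}}$ forces the summability condition $\varrho > s$. Finally, the resonant term $R(g,f)$ produces summands whose spectrum lies in a ball of radius $\sim 2^j$ rather than an annulus; converting it to a genuine dyadic block via a Bernstein / Plancherel--Polya inequality costs a factor $2^{jn(1/p-1)_+}$ in the quasi-Banach range, producing exactly the second clause $\varrho + s > n(1/p-1)_+$ of the hypothesis.

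Summing the three contributions gives the claimed bound $\|gf\|_{A^s_{p,q}} \lesssim \|g\|_{\cC^\varrho}\|f\|_{A^s_{p,q}}$, with both sharp conditions on $\varrho$ arising transparently from the $\Pi_f g$ and $R(g,f)$ estimates. For the $F$-scale the passage from scalar to vector-valued Littlewood--Paley norms requires the Fefferman--Stein maximal inequality, whereas for the $B$-scale only the scalar triangle and $\ell^q$-H\"older inequalities are needed. The main obstacle is precisely the resonant term in the quasi-Banach regime $p<1$ (and analogously $q<1$ for the $F$-case): the Bernstein loss has to be absorbed by the extra smoothness of $g$, and simultaneously tracking the two competing constraints $\varrho > s$ and $\varrho > n(1/p-1)_+ - s$ through the summation is the combinatorially delicate step. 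This is the route carried out in full detail in \cite[Section 4.2.2]{TII}, whose argument I would follow.
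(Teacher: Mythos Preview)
Your proposal is correct and aligned with the paper: the paper does not give its own proof of this proposition but simply refers to \cite[Section 4.2.2]{TII}, and the paraproduct/paramultiplication argument you sketch is precisely the one carried out there. Your identification of the two constraints $\varrho > s$ (from $\Pi_f g$) and $\varrho > n(1/p-1)_+ - s$ (from the resonant term via the Bernstein/Plancherel--Polya loss in the quasi-Banach range) matches Triebel's analysis.
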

\section{Local Fractal Functions of Besov and Triebel-Lizorkin Type}\label{sec6}
In this section, we again consider the case where $X \in\R^n$ is a domain and $Y := \R$. We derive  conditions so that a fractal function defined by an RB--operator of the form \eqref{Phi} belongs to $A^s_{p,q} (\oX)$. These conditions correct the results presented in \cite{M05}, and extend and generalize those in \cite{mas1,M97,M05,mas2,mas13}.

To this end, we make the following set of assumptions:
\begin{enumerate}
\item[(A1)]	  $X \in\R^n$ is a bounded domain and $Y := \R$.
\item[(A2)] In addition to satisfying condition (P), the family of  subsets $\{X_i \st i\in\N_m\} \in 2^X$ and bijective mappings $\{u_i\st i\in \N_m\}$ are supposed to be such that each $u_i$ is a similarity (transformation), i.e., a mapping $X_i \to X$, that enjoys the property that 
\[
|u_i(x) - u_i(x^\prime)| = \gamma_i |x - x^\prime|, \qquad\forall\; x, x^\prime\in X_i,
\]
for some constant $\gamma_i \in \R^+$. We do not assume that all $\gamma_i < 1$, $i\in \N_m$. Note that each $u_i$ is then of the form $u_i (\bullet) = \gamma_i \,O_i (\bullet) + \tau_i$, where $O_i\in \textrm{SO}(n)$ and $\tau_i\in \R^n$.
\item[(A3)]	Let $0< p\leq \infty$ ($0< p < \infty$ for the $F$--spaces), $0 < q \leq\infty$, and $s\in\R^+$.
\item[(A4)]	For $i\in \N_m$, the function $\lambda_i: X_i \to \R$ belongs to $A^s_{p,q} (\oX)$. 
\item[(A5)]	For $i\in \N_m$, the function $S_i: X_i\to \R$ belongs to the Zygmund space $\cC^\varrho (\oX)$, where $\varrho > \max\left\{s, n\left(\frac1p - 1\right)_+ -s \right\}$.
\end{enumerate}

Examples of fractal functions $\ff:X\subset\R^n\to \R$ that satisfy assumptions (A1) and (A2) in the global setting of IFSs include the family of affine fractal hypersurfaces in $\R^{n+1}$ constructed in \cite{gh,ghm1,ghm2,hm,m90,mas13a}. These constructions may be extended to the setting of local IFSs by taking {\em subsets} of the simplicial set $X$ that are mapped under similitudes onto a partition of $X$. We leave it to the interested reader to provide the details.

For the remainder of this paper, we assume that the two $n$-tuples $\blambda$ and $\bS$ are fixed. We also suppress them in the notation for $\Phi$ and its fixed point. Moreover, if a function $f$ has support in $\oX\subset\R^n$, we -- if need be -- regard it as defined on all of $\R^n$ by setting it equal to zero off $\Supp f$.
\subsection{Local fractal functions in Besov spaces}
First, we consider the case of Besov spaces and derive explicit conditions for the parameters $\gamma_i$ and the functions $S_i$, $i\in \N_m$ so that the associated local fractal function lies in a Besov space.

\begin{theorem}\label{tb}
Suppose the assumptions {\em(A1)} -- {\em(A5)} hold and that $s > \sigma_p$. Then the affine RB--operator given by \eqref{Phi} maps $B^s_{p,q} (\oX)$ into itself. Define a vector $\eta := (\eta_1, \ldots, \eta_m)^T\in \R^m$ whose components are given by: 
\be\label{etai}
\eta_i := \gamma_i^{\frac{n}{p} -s}\, \|S_i\|_{\infty, X_i},\quad i \in\N_m.
\ee
If
\be\label{con:1}
\max\left\{\|\xi\|_p ,\|\eta\|_q \right\} < 1, \quad 0< p, q \leq \infty,
\ee
then $\Phi$ is a contraction. In this case, the unique fixed point $\ff\in B^s_{p,q}(\oX)$ of $\Phi$ satisfies the self-referential equation
\[
\ff\circ u_i = \lambda_i + S_i\cdot \ff_i, \quad \text{on $X_i$, $\quad\forall\;i\in\N_m$},
\]
and is termed a \textbf{fractal function of class $B^s_{p,q}$}.
\end{theorem}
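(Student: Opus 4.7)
The plan is to exploit the affine structure of $\Phi$: writing $\phi := g-h$, one has $\Phi g - \Phi h = T\phi$, where
\[
T\phi := \sum_{i=1}^m (S_i \circ u_i^{-1})\cdot(\phi\circ u_i^{-1})\,\chi_{u_i(X_i)}
\]
is the linear part of $\Phi$ (the $\lambda_i$-term cancels in the difference). Contractivity of $\Phi$ on $B^s_{p,q}(\oX)$ is then equivalent to showing that the operator norm of $T$ is strictly less than one.

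Before estimating $T$, I would verify that $\Phi$ sends $B^s_{p,q}(\oX)$ into itself. The constant piece $\sum_i (\lambda_i\circ u_i^{-1})\chi_{u_i(X_i)}$ lies in $B^s_{p,q}$ by (A4) together with the scaling identity
\[
|g\circ u_i^{-1}|_{\dot B^s_{p,q}} = \gamma_i^{n/p - s}\,|g|_{\dot B^s_{p,q}},
\]
which I derive from the substitution $\tau\mapsto \gamma_i O_i\tau$ in \eqref{Bsemi}, mirroring the $L^p$ computation in the proof of Theorem \ref{ThLp}. The $f$-dependent piece then lies in $B^s_{p,q}$ by combining this scaling identity with the pointwise multiplier proposition from Section \ref{sec5}, whose hypothesis on $\varrho$ is precisely (A5).

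For the contraction estimate I would control $\|T\phi\|_{L^p}$ and $|T\phi|_{\dot B^s_{p,q}}$ separately. The $L^p$-bound $\|T\phi\|_{L^p}\leq\|\xi\|_p\,\|\phi\|_{L^p}$ is already the content of Theorem \ref{ThLp}. For the semi-norm I insert the decomposition $T\phi = \sum_i F_i$, with $\Supp F_i\subseteq \overline{u_i(X_i)}$ pairwise (almost) disjoint, into
\[
|T\phi|_{\dot B^s_{p,q}}^q = \int_{\R^n} |\tau|^{-sq-n}\,\|\Delta_\tau^M T\phi\|_{L^p}^q\,d\tau,
\]
pass the partition through the inner $\|\Delta_\tau^M(\cdot)\|_{L^p}^p$, and change variables under each $u_i$: the $L^p$-change produces the factor $\gamma_i^{n/p}$, the step rescaling $\tau\mapsto\gamma_i^{-1}O_i^{-1}\tau$ against the weight $|\tau|^{-sq-n}$ produces $\gamma_i^{-sq}$, and the supremum bound on $S_i$ produces $\|S_i\|_{\infty,X_i}$. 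The outer $\tau$-integration collects these factors into $\|\eta\|_q\,|\phi|_{\dot B^s_{p,q}}$. Combining the two pieces gives $\|T\phi\|_{B^s_{p,q}} \leq \max\{\|\xi\|_p,\|\eta\|_q\}\,\|\phi\|_{B^s_{p,q}}$, and Banach's fixed point theorem then produces the unique $\ff\in B^s_{p,q}(\oX)$ satisfying the stated self-referential equation.

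The main obstacle is rigorously pushing the disjoint-support decomposition through $\|\Delta_\tau^M(\cdot)\|_{L^p}^p$. Although the $F_i$ have pairwise disjoint supports, the shifted stencil points $\{x+\mu\tau\}_{\mu=0}^M$ straddle the boundaries between different $u_i(X_i)$ as soon as $|\tau|$ exceeds the separation scale, so the naive identity $\|\Delta_\tau^M \sum_i F_i\|_{L^p}^p = \sum_i \|\Delta_\tau^M F_i\|_{L^p}^p$ fails. I would bypass this via the concavity/quasi-triangle inequality $\|\sum_i g_i\|_{L^p}^{\min(p,1)} \leq \sum_i \|g_i\|_{L^p}^{\min(p,1)}$ applied to $g_i = \Delta_\tau^M F_i$ (which is sharp for $0<p\leq 1$), and for $p>1$ absorb the loss using the refined multiplier splitting $\|S_i\phi\|_{B^s_{p,q}} \leq \|S_i\|_\infty\,|\phi|_{\dot B^s_{p,q}} + c\,\|S_i\|_{\cC^\varrho}\,\|\phi\|_{L^p}$, so that the leading-order term feeds $\|\eta\|_q$ while the cross-boundary contributions are controlled by the $L^p$-term already handled by $\|\xi\|_p$.
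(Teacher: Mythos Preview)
Your overall strategy coincides with the paper's: both isolate the linear part $T\phi$, split the $B^s_{p,q}$--norm into its $L^p$--component (handled by Theorem~\ref{ThLp}, yielding the factor $\|\xi\|_p$) and its homogeneous semi-norm, and on each piece $u_i(X_i)$ run the change of variables $x'=u_i(x)$, $h\mapsto\gamma_i^{-1}O_i^{-1}h$ to extract the factor $\gamma_i^{n/p-s}\|S_i\|_{\infty,X_i}$. So the core computation is identical.

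Where you diverge is in your last paragraph, and you are right to pause there: the identity $\|\Delta_\tau^M\sum_i F_i\|_{L^p}^p=\sum_i\|\Delta_\tau^M F_i\|_{L^p}^p$ does fail once the stencil $\{x+\mu\tau\}$ crosses the interfaces between the $u_i(X_i)$. The paper does not confront this; it silently replaces the global difference $\Delta_h^M(\Phi\phi)(x')$ by the localized one $\Delta_h^M(\Phi\phi)(x';u_i(X_i))$, which by definition vanishes whenever any stencil point leaves $u_i(X_i)$, and then proceeds as though the two agreed. In other words, the obstacle you flag is simply suppressed in the paper's argument. Your proposed remedy (quasi-triangle inequality for $0<p\leq 1$, and the multiplier splitting $\|S_i\phi\|_{B^s_{p,q}}\leq\|S_i\|_\infty|\phi|_{\dot B^s_{p,q}}+c\|S_i\|_{\cC^\varrho}\|\phi\|_{L^p}$ for $p>1$) is more honest, but be aware that it will not recover the clean contraction constant $\max\{\|\xi\|_p,\|\eta\|_q\}$ stated in the theorem: you will pick up either the quasi-norm constant $m^{1/p-1}$ or an additive term involving $c\|S_i\|_{\cC^\varrho}$, which would have to be absorbed by strengthening the hypothesis on $\bS$.
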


\begin{proof}
Suppose that $f,g\in B^s_{p,q} (\oX)$ and set $\phi := f - g$. Then, $\Supp \Phi f\subseteq\oX$ and $\|\Phi f\|_{B^s_{p,q}} < \infty$, since all $\lambda_i\in B^s_{p,q} (\oX)$ and the functions $S_i$ are pointwise multipliers in $B^s_{p,q}$. Hence, $\Phi$ maps $B^s_{p,q}(\oX)$ into itself. 

First, we consider the case $q < \infty$. On $u_i(X_i)$, the following holds:
\begin{align*}
\int_{\R^n} |h|^{-sq} & \left(\int_{\R^n} \left|\Delta_h^M (\Phi\phi)\right|^p dm\right)^{q/p}\frac{dh}{|h|^n}\\
& = \int_{u_i(X_i)} |h|^{-sq} \left(\int_{u_i(X_i)} \left|\Delta_h^M (\Phi\phi)(x'; u_i(X_i))\right|^p dx'\right)^{q/p}\frac{dh}{|h|^n},
\end{align*}
since only if $h\in u_i(X_i)$ is there any guarantee that $\Delta_h^M (\Phi\phi)(x'; u_i(X_i))\neq 0$.

Hence, using the fact that $u_i = \gamma_i\,O_i + \tau_i$, with $\gamma_i > 0$, $O_i\in \textrm{SO}(n)$ and $\tau_i\in \R^n$, the last expression above implies that
\begin{align*}
\int_{u_i(X_i)} |h|^{-sq} & \left(\int_{u_i(X_i)} \left|\Delta_h^M (\Phi\phi)(x'; u_i(X_i))\right|^p dx'\right)^{q/p}\frac{dh}{|h|^n}\\
& = \int_{u_i(X_i)} |h|^{-sq} \left(\gamma_i^n\,\int_{X_i} \left|\Delta_{\gamma_i^{-1} O_i^{-1}h}^M (S_i\cdot\phi_i)(x; X_i)\right|^p dx\right)^{q/p}\frac{dh}{|h|^n}\\
& \leq \int_{X_i} \gamma_i^{-sq}\,|h|^{-sq} \gamma_i^{nq/p}\,\|S_i\|_{\infty,X_i}^q\,\left(\int_{X_i} \left|\Delta_{h}^M \phi_i (x; X_i)\right|^p dx\right)^{q/p}\frac{dh}{|h|^n}\\
& \leq \gamma_i^{q(\frac{n}{p} - s)}\,\|S_i\|_{\infty,X_i}^q\, \int_{\R^n} |h|^{-sq} \left(\int_{\R^n} \left|\Delta_h^M \phi\right|^p dm\right)^{q/p}\frac{dh}{|h|^n},
\end{align*}
where we used $x' := u_i (x)$.

Thus, 
\[
|\Phi\phi|_{\dot{B}^s_{p,q}} \leq \left(\sum_{i=1}^m \gamma_i^{q\left(\frac{n}{p} -s\right)}\, \|S_i\|_\infty^q\right)^{1/q}\,|\phi|_{\dot{B}^s_{p,q}}.
\]
For $q = \infty$, we obtain, using similar arguments as above, the following inequality on $u_i(X_i)$, $i\in \N_m$:
\begin{align*}
|h|^{- s p}\,\int_{u_i(X_i)} \left|\Delta_h^M\right.&\left.\Phi\phi (x')\right|^p dx' \leq |h|^{- sp} \gamma_i^n\, \|S_i\|_{\infty, X_i}^p \,\int_{X_i} \left|\Delta_{\gamma_i^{-1} O_i^{-1} h}^M \phi_i((x)\right|^p dx\\
& = |h|^{- sp}\,\gamma_i^{n - sp}\, \|S_i\|_{\infty, X_i}^p \,\int_{X_i} \left|\Delta_{h}^M \phi_i((x)\right|^p dx\\
& \leq \left(\max_{i\in\N_m}\left\{\gamma_i^{n/p - s}\, \|S_i\|_{\infty, X_i}\right\}\right)^p\, |h|^{- sp}\,\int_{X_i} \left|\Delta_{h}^M \phi_i((x)\right|^p dx.
\end{align*}
Hence,
\begin{align*}
|h|^{- s p}\,\int_{\R^n} \left|\Delta_h^M\right.&\left.\Phi\phi\right|^p dm = |h|^{-s p} \sum_{i=1}^m \,\int_{u_i(X_i)} \left|\Delta_h^M \Phi\phi (x')\right|^p dx'\\
& \leq \left(\max_{i\in\N_m}\left\{\gamma_i^{n/p - s}\, \|S_i\|_{\infty, X_i}\right\}\right)^p\,|h|^{-sp}\,\sum_{i=1}^m\,\int_{X_i} \left|\Delta_{h}^M \phi_i((x)\right|^p dx\\
& \leq \left(\max_{i\in\N_m}\left\{\gamma_i^{n/p - s}\, \|S_i\|_{\infty, X_i}\right\}\right)^p\,|h|^{-sp}\,\|\phi\|_{L^p}^p,
\end{align*}
implying that
\[
|\Phi\phi|_{\dot{B}^s_{p,q}} \leq \left(\max_{i\in\N_m}\left\{\gamma_i^{n/p - s}\, \|S_i\|_{\infty, X_i}\right\}\right)|\phi|_{\dot{B}^s_{p,q}}.
\]
Therefore, defining a vector $\eta\in \R^m$ whose components are given by \eqref{etai}, applying the $p$-quasinorm introduced in \eqref{pnorm}, and combining the result with that of Theorem \ref{ThLp}, yields
\[
\forall\,0< p,q\leq \infty,\;\forall s > \sigma_p: \quad\max\left\{\|\xi\|_p ,\|\eta\|_q \right\} < 1\quad\Longrightarrow\quad \ff\in B^s_{p,q} (\oX).
\]
%
%
This proves the theorem.
\end{proof}
\begin{examples}\hfill
\begin{enumerate}
\item[{\em (a)}] In the special case that was considered in \cite{M97}, namely the setting of global fractal functions in $\R$, one obtains from \eqref{xi} and \eqref{etai} for $n = 1$, $\gamma_i := 1/m$, $S_i := s_i\in \R$, $i\in \N_m$, $s := k\in\mathbb{N}_0$, and $p = q := 2$, the
condition given in \cite{M97}, namely
\[
\sum_{i=1}^m |s_i|^2 m^{2 k - 1} < 1\quad\Longrightarrow\quad\mathfrak{f} \in W^{k,2}.
\]
Note, that in this case $\xi_i \leq \eta_i$, for all $i\in\N_m$.
\item[{\em (b)}] For the Slodeckij spaces $W^{s,p} = B^s_{p,p}$, $n:=1$, $1 < p <\infty$, and $0 < s \notin\N_0$, we immediately obtain the following condition:
\[
\sum_{i=1}^m |s_i|^p m^{p s - 1} < 1\quad\Longrightarrow\quad\mathfrak{f} \in W^{s,p},
\]
if we set as above $\gamma_i := 1/m$ and $S_i := s_i\in \R$, $i\in \N_m$.
\item[{\em (c)}] For $p = q = \infty$, one obtains requirement for a fractal function to be in $\cC^s$, namely,
\[
\max\{\max_{i\in \N_m} \{\|S_i\|_{\infty, X_i}\}, \max_{i\in \N_m} \{\gamma_i^{-s} \|S_i\|_{\infty, X_i}\}\} < 1.
\]
For the special case of homogenous H\"older spaces $\dot{C}^s$, the above formula extends the one presented in \cite{mas13} for a particular one-dimensional setting.
\end{enumerate}
\end{examples}
\subsection{Local fractal functions in Triebel-Lizorkin spaces}
Here, we state explicit conditions for the parameters $\gamma_i$ and the functions $S_i$, $i\in \N_m$ so that the associated local fractal function lies in a Triebel-Lizorkin space.
\begin{theorem}
Suppose the assumptions {\em(A1)} -- {\em(A5)} hold and that $s > \sigma_{n,p,q}$. Then the affine RB--operator given by \eqref{Phi} maps $F^s_{p,q} (\oX)$ into itself. Let $\eta$ again be the vector with components 
\be\label{zetai}
\eta_i := \gamma_i^{\frac{n}{p} - s}\,\|S_i\|_{\infty, X_i}\quad i \in\N_m.
\ee
If
\be\label{con:2}
\max\left\{\|\xi\|_p ,\|\eta\|_p \right\} < 1, \quad 0< p < \infty,
\ee
then $\Phi$ is a contraction. In this case, the unique fixed point $\ff\in F^s_{p,q}(\oX)$ of $\Phi$ satisfies the self-referential equation
\[
\ff\circ u_i = \lambda_i + S_i\cdot \ff_i, \quad \text{on $X_i$, $\quad\forall\;i\in\N_m$},
\]
and is termed a \textbf{fractal function of class $F^s_{p,q}$}.
\end{theorem}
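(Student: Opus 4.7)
The plan is to mirror the structure of the proof of Theorem \ref{tb} as closely as possible, exploiting the analogous self-similar scaling behavior of the $\dot{F}^s_{p,q}$ semi-norm, while paying careful attention to the reversed order of integration. First I would verify that $\Phi$ maps $F^s_{p,q}(\oX)$ into itself: since each $\lambda_i\in F^s_{p,q}(\oX)$ by (A4), and since assumption (A5) together with the multiplier proposition at the end of Section \ref{sec5} makes each $S_i$ a pointwise multiplier on $F^s_{p,q}$, every summand on the right-hand side of \eqref{Phi} lies in $F^s_{p,q}(\oX)$. Together with Theorem \ref{ThLp} for the $L^p$ component, this yields the mapping property.

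For contractivity, I set $\phi := f - g$ and estimate $|\Phi\phi|_{\dot{F}^s_{p,q}}$ via the characterization in \eqref{Fsemi}. Since $\{u_i(X_i)\}$ partitions $\oX$, I would split the outer $L^p$-integration in $x$ into a sum over the pieces $u_i(X_i)$. On each piece I apply the two changes of variables $x' = u_i(x)$ and $h = \gamma_i O_i k$, exploiting that $u_i(y + \mu k) = u_i(y) + \mu\,\gamma_i O_i k$ so that $\Delta_h^M(\Phi\phi)(u_i(y); u_i(X_i)) = \Delta_k^M(S_i\cdot\phi_i)(y; X_i)$. The inner $L^q(dh/|h|^n)$-integration then absorbs a factor $\gamma_i^{-sq}$ (the Jacobian $\gamma_i^n$ from $dh$ cancels against $|h|^{-n}$), producing a factor $\gamma_i^{-s}$ at the $L^q$ level, while the outer $L^p$ integration contributes an additional $\gamma_i^n$ from $dx' = \gamma_i^n dx$. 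Bounding $|\Delta_k^M(S_i\phi_i)(y; X_i)|$ pointwise by $\|S_i\|_{\infty, X_i}\,|\Delta_k^M\phi_i(y; X_i)|$ in the same spirit as the Besov argument, the contribution of the $i$-th piece to $|\Phi\phi|_{\dot{F}^s_{p,q}}^p$ becomes $\gamma_i^{n - sp}\,\|S_i\|_{\infty, X_i}^p\,|\phi|_{\dot{F}^s_{p,q}}^p = \eta_i^p\,|\phi|_{\dot{F}^s_{p,q}}^p$.

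Here lies the key structural difference from Theorem \ref{tb}. In the Besov case the outer integration is over $h$, so after partitioning the pieces combine at the $q$-level, producing $\|\eta\|_q$; in the Triebel--Lizorkin case the outer $L^p$-integration is over $x$, so the pieces combine at the $p$-level, giving $\sum_i \eta_i^p = \|\eta\|_p^p$ and hence $|\Phi\phi|_{\dot{F}^s_{p,q}} \leq \|\eta\|_p\,|\phi|_{\dot{F}^s_{p,q}}$. Combining this with the $L^p$-contraction bound of Theorem \ref{ThLp}, I obtain $\|\Phi\phi\|_{F^s_{p,q}} \leq \max\{\|\xi\|_p, \|\eta\|_p\}\,\|\phi\|_{F^s_{p,q}}$, so condition \eqref{con:2} gives contractivity and, via the Banach Fixed Point Theorem, a unique fixed point $\ff\in F^s_{p,q}(\oX)$ satisfying the displayed self-referential equation.

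The main technical point to be careful about is the same one that arises implicitly in the Besov proof: the truncated difference $\Delta_k^M(S_i\phi_i)(y; X_i)$ does not factor cleanly as $S_i(y)\,\Delta_k^M\phi_i(y; X_i)$ when $S_i$ is non-constant, so a Leibniz-type expansion is required and the Zygmund regularity from (A5) must enter through the multiplier inequality to absorb the cross-terms into constants independent of $\phi$. A secondary but routine concern is the interaction near the boundaries between the pieces $u_i(X_i)$, which is handled by the truncated-difference formalism of Section \ref{sec5} so that iterates leaving the relevant piece contribute zero to the inner integral.
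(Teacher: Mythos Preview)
Your proposal is correct and follows essentially the same route as the paper: splitting the outer $L^p$-integral over the partition pieces $u_i(X_i)$, performing the paired changes of variables in $x$ and $h$ to extract the factor $\gamma_i^{n-sp}\|S_i\|_{\infty,X_i}^p$, summing at the $p$-level to obtain $\|\eta\|_p$, and combining with Theorem~\ref{ThLp} for the $L^p$ part. Your closing caveat about the Leibniz expansion for $\Delta_k^M(S_i\phi_i)$ is in fact more scrupulous than the paper's own argument, which silently pulls $\|S_i\|_{\infty,X_i}$ out of the difference as if $S_i$ were constant.
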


\begin{remark}
Note that condition \eqref{con:2} is independent of $q$. This independence is a direct consequence of the placement of the norms in the definition \eqref{ctriebel} of a Triebel-Lizorkin space. (Compare this to \eqref{cbesov}!)
\end{remark}

\begin{proof}
Suppose that $f,g\in F^s_{p,q} (\oX)$ and set $\phi := f - g$. Then, $\Supp \Phi f\subseteq\oX$ and $\|\Phi f\|_{F^s_{p,q}} < \infty$, since all $\lambda_i\in F^s_{p,q} (\oX)$ and the functions $S_i$ are pointwise multipliers in $F^s_{p,q}$. Hence, $\Phi$ maps $F^s_{p,q}(\oX)$ into itself. 

We first consider the case $0< q < \infty$. Then, on $u_i(X_i)$, the following hold:
\begin{align*}
\int_{\R^n} \left(\int_{\R^n} |h|^{-sq} \right. & \left. \left|\Delta_h^M \Phi\phi\right|^q \frac{dh}{|h|^n}\right)^{p/q} dm\\
& = \int_{u_i(X_i)} \left(\int_{u_i(X_i)} |h|^{-sq} \left|\Delta_h^M (\Phi\phi)(x'; u_i(X_i))\right|^q \frac{dh}{|h|^n}\right)^{p/q} dx'\\
& = \gamma_i^n\,\int_{X_i} \left(\int_{u_i(X_i)} |h|^{-sq} \left|\Delta_{\gamma_i^{-1}O_i^{-1}h}^M S_i(x)\cdot\phi_i(x;X_i)\right|^q \frac{dh}{|h|^n}\right)^{p/q} dx\\
& \leq \gamma_i^{n-ps}\,\|S_i\|_{\infty, X_i}^p\,\int_{X_i} \left(\int_{X_i} |h|^{-sq} \left|\Delta_h^M \phi_i(x;X_i)\right|^q \frac{dh}{|h|^n}\right)^{p/q} dx\\
& \leq \gamma_i^{n-ps}\,\|S_i\|_{\infty, X_i}^p\,\int_{\R^n} \left(\int_{\R^n} |h|^{-sq} \left|\Delta_h^M \phi\right|^q \frac{dh}{|h|^n}\right)^{p/q} dx.
\end{align*}
This yields
\[
|\Phi\phi|_{\dot{F}^s_{p,q}} \leq \left(\sum_{i=1}^m \,\gamma_i^{p(\frac{n}{p} -s)}\,\|S_i\|_{\infty, X_i}^p\right)^{1/p}\,|\phi|_{\dot{F}^s_{p,q}}.
\]
In case $q = \infty$, we obtain on $u_i(X_i)$:
\begin{align*}
\int_{\R^n} &\left(\sup_{0\neq h\in\mathbb{R}^n} |h|^{-s}\,|\Delta_h^M (\Phi\phi)|\,\right)^p dm\\
& = \int_{u_i(X_i)}\left(\sup_{0\neq h\in u_i(X_i)} |h|^{-s}\,|\Delta_h^M (\Phi\phi)(x'; u_i(X_i))|\,\right)^p dx'\\
& \leq \gamma_i^n\,\|S_i\|^p_{\infty,X_i}\,\int_{X_i}\left(\sup_{0\neq h\in X_i} \gamma_i^{-s}\,|h|^{-s}\,|\Delta_{h}^M \phi_i(x)|\,\right)^p dx\\
& \leq \gamma_i^{n - ps}\,\|S_i\|^p_{\infty,X_i}\,\int_{\R^n}\left(\sup_{0\neq h\in\mathbb{R}^n} |h|^{-s}\,|\Delta_{h}^M \phi(x)|\,\right)^p dx.
\end{align*}
Thus,
\[
|\Phi\phi|_{\dot{F}^s_{p,\infty}} \leq \left(\sum_{i=1}^m \,\gamma_i^{p(\frac{n}{p} -s)}\,\|S_i\|_{\infty, X_i}^p\right)^{1/p}\,|\phi|_{\dot{F}^s_{p,\infty}}.
\]
Therefore, taking the vector $\eta\in \R^m$ whose components are given by \eqref{zetai}, applying the $p$-quasinorm introduced in \eqref{pnorm}, and combining the result with that of Theorem \ref{ThLp}, gives
\begin{align*}
\forall\,0< p < \infty,&\;,\forall\, 0 <q\leq \infty,\,\forall s > \sigma_{n,p,q}:\\
& \;\max\left\{\|\xi\|_p ,\|\eta\|_p \right\} < 1\quad\Longrightarrow\quad\ff\in F^s_{p,q} (\oX),
\end{align*}
which yields the statement in the theorem.
\end{proof}

\begin{examples}\hfill
\begin{enumerate}
\item[{\em (a)}] For a local fractal function to belong to a Bessel Potential space $H^{s,p} = F^s_{p,2}$, we obtain in the special case we considered above, where $n:=1$, $\gamma_i := 1/m$, and $S_i := s_i\in \R$, the criterion:
\[
\sum_{i=1}^m N^{ps - 1} |s_i|^p < 1,
\]
for $1 < p \leq 2$ and $s > 1/p$.
\item[{\em (b)}] Another class of function spaces that are Triebel-Lizorkin spaces for a certain range of the indices $p,q,$ and $s$ are the \textbf{local Hardy spaces}. They are, for our purposes, defined as follows. Let $0 < p <\infty$ and let $\sD_0 := \sD_0(\R^n)$ denote the class of all $C^\infty$-functions $\varphi$ with compact support satisfying $\varphi(0) = 1$. Set $\varphi_t(x) := \varphi (tx)$, for $t > 0$ and $x\in \R^n$. Then
\[
h_p := h_p(\R^n) := \left\{f\in L^p \st \left\|\sup_{0 < t <1}\left|(\cF^{-1}\,\varphi_t\, \cF) f\right|\right\|_{L^p} < \infty\right\},
\]
where $\cF$ denotes the Fourier transform. It can be shown that this definition is independent of the test function $\varphi$ in the sense of equivalent quasi-norms. Moreover, $h_p = F^0_{p,2}$. 

Using the set-up in Example {\em(a)} above, we see that a fractal function belongs to the Hardy space $h_p$ provided that
\[
\sum_{i=1}^m |s_i|^p < N,
\]
for $0 < p < \infty$.
\end{enumerate}
\end{examples}

\end{document}